\newtheorem{theorem}{Theorem}[section]
\newtheorem{lemma}[theorem]{Lemma}
\newtheorem{corollary}[theorem]{Corollary}
\newtheorem{proposition}[theorem]{Proposition}
\newtheorem{sublemma}{}[theorem]
\newcommand{\cM}{{\mathcal M}}
\newcommand{\cN}{{\mathcal N}}
\newcommand{\blue}{\textcolor{black}}
\newcommand{\red}{\textcolor{black}}
\begin{document}

\title[Orchard Networks]{A Class of Phylogenetic Networks Reconstructable from Ancestral Profiles}

\author{P\'{e}ter L.\ Erd\H{o}s}
\address{Alfr\'{e}d R\'{e}nyi Institute of Mathematics, Hungarian Academy of Sciences, Budapest, Hungary}
\email{erdos.peter@renyi.mta.hu}

\author{Charles Semple}
\address{School of Mathematics and Statistics, University of Canterbury, Christchurch, New Zealand}
\email{charles.semple@canterbury.ac.nz}

\author{Mike Steel}
\address{School of Mathematics and Statistics, University of Canterbury, Christchurch, New Zealand}
\email{mike.steel@canterbury.ac.nz}

\thanks{The first author was supported in part by the National Research, Development and Innovation Office (NKFIH grants K~116769 and KH~126853). The second and third authors were supported by the New Zealand Marsden Fund (UOC1709).}

\keywords{Tree-child networks, orchard networks, accumulation phylogenies, \blue{ancestral profiles, path-tuples}}

\subjclass{05C85, 92D15}

\date{\today}

\maketitle

\begin{abstract}
Rooted phylogenetic networks provide an explicit representation of the evolutionary history of a set $X$ of sampled species. In contrast to phylogenetic trees which show only speciation events, networks can also accommodate reticulate processes (for example, hybrid evolution, endosymbiosis, and lateral gene transfer). A major goal in systematic biology is to infer evolutionary relationships, and while phylogenetic trees can be uniquely determined from various simple combinatorial data on $X$, for networks the reconstruction question is much more subtle. Here we ask when can a  network be uniquely reconstructed from its `ancestral profile' (the number of paths from each ancestral vertex to each element in $X$). We show that reconstruction holds (even within the class of all networks) for a class of networks we call `orchard networks', and we provide a polynomial-time algorithm for reconstructing any orchard network from its ancestral profile. Our approach relies on establishing a structural theorem for orchard networks, which also provides for a fast (polynomial-time) algorithm to test if any given network is of orchard type. Since the class of orchard networks includes \blue{tree-sibling tree-consistent networks and} tree-child networks, our result \blue{generalise reconstruction results from 2008 and} 2009. Orchard networks allow for an unbounded number \blue{$k$} of reticulation vertices, in contrast to \blue{tree-sibling tree-consistent networks and} tree-child networks for which \blue{$k$} is at most \blue{$2|X|-4$ and} $|X|-1$, \blue{respectively}.
\end{abstract}

\section{Introduction}

Phylogenetic trees and networks have become a ubiquitous tool for representing  evolutionary relationships in systematics biology~\cite{fel04} and other areas of classification (for example, language evolution and epidemiology). From early sketches by Charles Darwin and Ernst Haeckel in the 19$^{{\rm th}}$ century, more complex and detailed trees are now revealing the finer details of portions of the `tree of life'. Today, biologists routinely build phylogenetic trees on hundreds of species, such as the recent tree of (nearly) all $\sim$10,000 species of birds~\cite{jet12}. Phylogenetic trees have a leaf set $X$ that consists of the sampled organisms (typically, a group of present-day species); the root of the tree represents the most recent common ancestor of the species in $X$. Current methods for inferring phylogenetic trees trees generally use genomic data from the species in $X$, and apply one of several possible reconstruction methods. While many of these methods are statistically based, they are ultimately founded on underlying combinatorial uniqueness results concerning trees~\cite{fel04, sem03}.

Although phylogenetic trees have proved a convenient representation for many groups of species including, for example, mammals and birds, in other domains of life evolution is not always described as a simple vertical process of speciation (where lineages split in two as new species form) and extinction. Instead, various reticulate processes allow for a `horizontal' component. Two main examples include the formation of hybrid species (such as in certain plant or fish species), and the exchange of genes between species in a process called lateral gene transfer (such as in bacteria). An additional reticulate process relevant to early life on earth is endosymbiosis in which organelles are incorporated into cells.

For these reasons, phylogenetic networks (acyclic directed graphs with a single root vertex and leaves forming the set $X$) have been proposed as a more flexible and accurate representation of evolutionary history~\cite{doo99, koo15}. Accordingly, there has been considerable recent interest in extending the mathematical foundation of phylogenetic tree reconstruction to networks~\cite{hus10}. This extension faces a number of mathematical obstacles. In particular, while trees can be encoded and reconstructed in several ways (for example, based on their associated system of clusters, path distances between pairs of leaves, and induced $3$-leaf subtrees), none of these approaches extends to networks, except for in very special cases~\cite{gam12, ier14, wil10}. This has led to various approaches being proposed, which usually involve one or more of the following:
\begin{enumerate}[(i)]
\item[(i)] not distinguishing between phylogenetic networks that are similar in a certain way~\cite{par15};

\item[(ii)] considering reconstruction only within a limited subclass of phylogenetic networks~\cite{bor18}; and

\item[(iii)] allowing types of information for $X$ beyond what is normally used for tree reconstruction~\cite{bar06}.
\end{enumerate}

Approach (ii) has received the most attention so far, with some positive results (for example, for reconstructing the subclass of normal networks from their induced trees \cite{wil11}). In this paper, we focus more on approach (iii), and, although we restrict to a class of subnetworks (which we call `orchard networks'), our reconstruction result has the additional strength that it can distinguish between any two networks from information on $X$ provided at least one of them is an orchard network. \blue{To provide some intuition, informally, a phylogenetic network is an orchard network if it can be reduced to a single vertex by recursively finding a pair of leaves that form either a cherry or a reticulated cherry, and then applying a cherry reduction to that pair of leaves.}

The type of information on $X$ we consider is the following. View the interior (non-leaf) vertices of a phylogenetic network $\cN$ as being labelled. In the biological setting, this label could correspond, for example, to the  genome of the ancestral species at this vertex (or some sub-genome that is sufficiently detailed to distinguish this ancestral vertex from others). For each species $x$ in the leaf set $X$, suppose we can \blue{count} the number of directed paths in the network from each ancestral genome (i.e.\ interior vertex) to $x$. This `ancestral profile'  is thus an ordered tuple of numbers, one tuple for each leaf in $X$ (note that current technology does not yet provide this information, so our approach is in the spirit of earlier mathematical results in phylogenetics that preceded the data required for their application). It turns out that such information is not enough to distinguish between an arbitrary pair of networks (we provide an example). However, if the underlying network $\cN$ is an orchard network, our main result shows that no other network (orchard or not) can have the same ancestral profile. Moreover, we present and justify a polynomial-time algorithm for reconstructing any orchard network from its ancestral profile. Our arguments rely on a structural property of orchard networks which also implies that there is a polynomial-time algorithm for testing whether or not an arbitrary network is an orchard network.

Our results generalise earlier work in~\blue{\cite{car08, car09}} which considered the more restricted \blue{classes} of \blue{`tree-sibling time-consistent' networks and} `tree-child' networks, \blue{respectively}. These authors use equivalent information on $X$ for reconstruction, however, their reconstruction result faces two limitations that are lifted here. First, the uniqueness \blue{results} of~\blue{\cite{car08, car09}} hold only within the class of \blue{tree-sibling time-consistent networks and} tree-child networks, whereas we show that ancestral profiles can distinguish an orchard network from any other network. Second, \blue{neither tree-sibling time-consistent networks nor} tree-child networks can have too many reticulate vertices (at most \blue{and $2n-4$ and} $n-1$, \blue{respectively}, where $n=|X|$), whereas orchard networks can have arbitrarily many reticulate vertices (independent of $n$).

Our results are also related to (and partly motivated by) earlier work by \cite{bar06} and \blue{\cite{wil08}} on `accumulation phylogenies'. This involved  a different subclass of networks (called `regular' in these papers, and `cluster networks' in \cite{hus10}), which neither contains, nor is contained in the subclass of orchard networks. A limitation of this subclass is that (unlike orchard networks)  they do not allow `redundant arcs' (an arc $(u, v)$ for which there is another path in the network from $u$ to $v$). Allowing redundant arcs has a strong biological motivation since even if each reticulation events happens instantaneously between two contemporaneous species, redundant arcs can still appear in the resulting network if not all species at the present are sampled. The results in~\cite{bar06, wil08} also assume any two networks being considered are within this same subclass.  In summary, our results are not directly related to this earlier work on accumulation phylogenies, apart from using a related type of information.

The paper is organised as follows. The next section contains some necessary definitions along with the statement of the main result (Theorem~\ref{main1}) and deduces, as a consequence, the main result (Theorem~1) in~\cite{car09}. This section also provides examples to justify various claims. Section~\ref{prelimsec} describes some preliminary lemmas, which apply more generally than for ancestral profiles, and in Section~\ref{ordersec} we state and prove the structural property of orchard networks that allows for an easy test as to whether or not an arbitrary network is of orchard type. The proof of Theorem~\ref{main1} is established in Section~\ref{proofsec}. \blue{We end the paper with a brief discussion in Section~\ref{last}.}

Lastly, just as we completed the write-up of this paper, a manuscript~\cite{jan18} was posted on arXiv that also considers the class of orchard networks (referred to as ``cherry-picking networks'' in~\cite{jan18}). The focus of that manuscript is quite different to that of this paper; nevertheless, it contains an independent and different proof of the structural property of orchard networks which is needed as a lemma for Theorem~\ref{main1} in this paper.

\section{Main Result}

Throughout the paper $X$ denotes a non-empty finite set and, unless otherwise stated, all paths are directed. For vertices $u$ and $v$ of a directed graph $D$, we say $v$ is {\em reachable} from $u$ if there is a path in $D$ from $u$ to $v$. Furthermore, for sets $A$ and $B$, we denote the set obtained from $A$ by removing every element in $A$ that is also in $B$ by $A-B$. If $|B|=1$, say $B=\{b\}$, we denote this by $A-b$.

\noindent {\bf Phylogenetic networks.} A {\em phylogenetic network on $X$} is a rooted acyclic directed graph with no arcs in parallel and satisfying the following properties:
\begin{enumerate}[(i)]
\item the (unique) root has \blue{in-degree zero and} out-degree two;

\item a vertex with out-degree zero has in-degree one, and the set of vertices with out-degree zero is $X$; and

\item all other vertices either have in-degree one and out-degree two, or in-degree two and out-degree one.
\end{enumerate}
For technical reasons, if $|X|=1$, we additionally allow a single vertex to be a phylogenetic network, in which case, the root is the vertex in $X$. Phylogenetic networks as defined here are also referred to as `binary phylogenetic networks' in the literature.

Let $\cN$ be a phylogenetic network on $X$. The vertices with out-degree zero are the {\em leaves} of $\cN$, and so $X$ is called the {\em leaf set} of $\cN$. Furthermore, vertices with in-degree one and out-degree two are {\em tree vertices}, while vertices of in-degree two and out-degree one are {\em reticulations}. The arcs directed into a reticulation are called {\em reticulation arcs}, all other arcs are {\em tree arcs}. To illustrate, an example of a phylogenetic network with leaf set $\{x_1, x_2, \ldots, x_6\}$ and three reticulations is shown in Fig.~\ref{orchard2}.

\begin{figure}
\center
\input{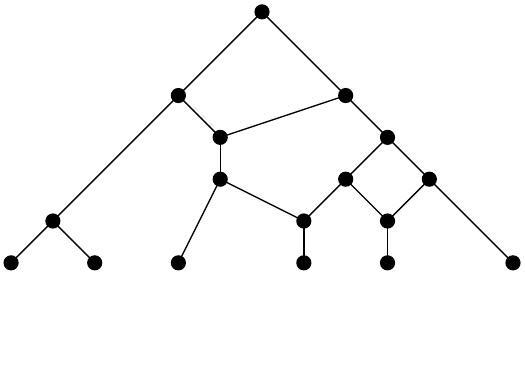_t}
\caption{A phylogenetic network $\cN$ on $\{x_1, x_2, \ldots, x_6\}$. Here, $\{x_1, x_2\}$ is a cherry and $\{x_3, x_4\}$ is a reticulated cherry with $x_4$ the reticulation leaf.}
\label{orchard2}
\end{figure}

Lastly, let $\cN_1$ and $\cN_2$ be two phylogenetic networks on $X$ with vertex and arc sets $V_1$ and $E_1$, and $V_2$ and $E_2$, respectively. We say $\cN_1$ is {\em isomorphic} to $\cN_2$ if there exists a bijection $\varphi: V_1\rightarrow V_2$ such that $\varphi(x)=x$ for all $x\in X$, and $(u, v)\in E_1$ if and only if $(\varphi(u), \varphi(v))\in E_2$ for all $u, v\in V_1$.

\noindent {\bf Ancestral tuples and ancestral profile.} Let $\cN$ be a phylogenetic network on $X$ with vertex set $V$. Let $v_1, v_2, \ldots, v_t$ be a fixed (arbitrary) labelling of the vertices in $V-X$. For all $x\in X$, the {\em ancestral tuple} of $x$, denoted $\sigma(x)$, is the $t$-tuple whose $i$-th entry is the number of paths in $\cN$ from $v_i$ to $x$. Denoted by $\Sigma_{\cN}$, we call \red{the set}
$$\Sigma_{\cN}=\{\red{(x, \sigma(x))}: x\in X\},$$
\red{of ordered pairs} the {\em ancestral profile} of $\cN$. \blue{Furthermore, if $\cN'$ is a phylogenetic network on $X$ and, up to an ordering of the non-leaf vertices of $\cN'$, we have $\Sigma_{\cN'}=\Sigma_{\cN}$, we say {\em $\cN'$ realises $\Sigma_{\cN}$}. Lastly, although $\Sigma_{\cN}$ depends on the ordering of the vertices in $V-X$, the ordering is fixed and so the labelling can be effectively ignored.}

\noindent {\bf Cherries and reticulated cherries.} Let $\mathcal N$ be a phylogenetic network on $X$, and let $\{a, b\}$ be a $2$-element subset of $X$. Let $p_a$ and $p_b$ denote the parents of $a$ and $b$, respectively. We say $\{a, b\}$ is a {\em cherry} of $\mathcal N$ if $p_a=p_b$. Furthermore, if one of the parents, say $p_b$, is a reticulation and $(p_a, p_b)$ is an arc in $\mathcal N$, then $\{a, b\}$ is a {\em reticulated cherry} of $\cN$, in which case, $b$ is the {\em reticulation leaf} of the reticulated cherry. Observe that $p_a$ is necessarily a tree vertex. For the phylogenetic network shown in Fig.~\ref{orchard2}, $\{x_1, x_2\}$ is a cherry, while $\{x_3, x_4\}$ is a reticulated cherry in which $x_4$ is the reticulation leaf. \blue{Furthermore, in Fig.~\ref{orchard2}, $\{x_4, x_5\}$ is neither a cherry nor a reticulated cherry.}

We next describe two operations associated with cherries and reticulated cherries that are central to this paper. Let $\cN$ be a phylogenetic network. First suppose that $\{a, b\}$ is a cherry of $\cN$. Then {\em reducing} $b$ is the operation of deleting $b$ and suppressing the resulting vertex of in-degree one and out-degree one. If the parent of $a$ and of $b$ is the root of $\cN$, then reducing $b$ is the operation of deleting $b$ as well as deleting the root of $\cN$, thus leaving only the isolated vertex $a$. Now suppose that $\{a, b\}$ is a reticulated cherry of $\cN$ in which $b$ is the reticulation leaf. Then {\em cutting} $\{a, b\}$ is the operation of deleting the reticulation arc joining the parents of $a$ and $b$, and suppressing the two resulting vertices of in-degree one and out-degree one. It is easily seen that the operations of reducing a cherry and cutting a reticulated cherry both result in a phylogenetic network. Collectively, we refer to these two operations as {\em cherry reductions}. To illustrate, the phylogenetic network shown in Fig.~\ref{cutting}(i) (resp.\ Fig.~\ref{cutting}(ii)) has been obtained from the phylogenetic network in Fig.~\ref{orchard2} by reducing $x_2$ (resp.\ cutting $\{x_3, x_4\}$).

\begin{figure}
\center
\input{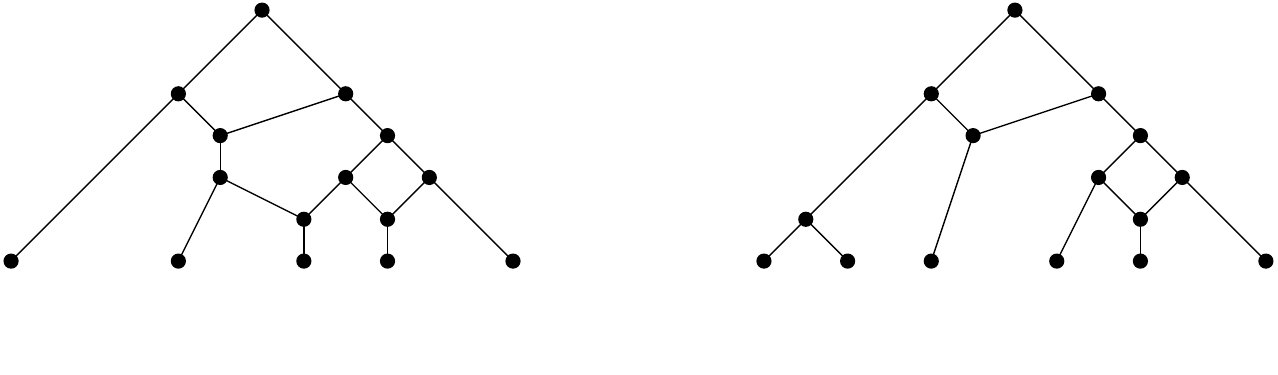_t}
\caption{$\cN_1$ has been obtained from $\cN$ in Fig.~\ref{orchard2} by reducing $x_2$, while $\cN_2$ has been obtained from $\cN$ by cutting $\{x_3, x_4\}$.}
\label{cutting}
\end{figure}

\noindent {\bf Orchard networks.} For a phylogenetic network $\cN$, the sequence
\begin{align}
\cN=\cN_0, \cN_1, \cN_2, \ldots, \cN_k
\label{seq1}
\end{align}
of phylogenetic networks is a {\em cherry-reduction sequence of $\cN$} if, for all $i\in \{1, 2, \ldots, k\}$, the phylogenetic network $\cN_i$ is obtained from $\cN_{i-1}$ by a (single) cherry reduction. \blue{The sequence is {\em maximal} if $\cN_k$ has no cherries or reticulated cherries.} If $\cN_k$ consists of a single vertex, the sequence is {\em complete}, in which case, $\cN$ is called an {\em orchard network}. Observe that if (\ref{seq1}) is complete, then the leaf set of \blue{$\cN_{k-1}$} has size two and the parent of each leaf is the root of $\cN_{k-1}$. It is easily checked that the phylogenetic network shown in Fig.~\ref{orchard2} is an orchard network. In Section~\ref{ordersec}, we show that if $\cN$ is an orchard network, then every maximal sequence of cherry reductions of an orchard network $\cN$ is complete. Thus if we want to construct a complete cherry-reduction sequence for \blue{an orchard network}, the order in which the reductions are applied does not matter. In turn, this provides an easy test to decide whether or not an arbitrary network is orchard.

One of the most well-studied classes of phylogenetic networks is the class of tree-child networks. Introduced in~\cite{car09}, a phylogenetic network is {\em tree-child} if every non-leaf vertex is the parent of a tree vertex or a leaf. Tree-child networks are examples of orchard networks~\cite{bor16}, but there \blue{exist} orchard networks that are not tree-child. Indeed, while the size of the leaf set bounds the total number of vertices of a tree-child network~\cite{car09}, the total number of vertices in an orchard network is not necessarily bounded by the size of its leaf set. For example, the phylogenetic network shown in Fig.~\ref{orchard1}(i) is an orchard network with exactly three leaves but, by extending it in the obvious way, \blue{we} can \blue{produce an orchard network with} an arbitrarily large odd number of vertices \blue{and still with exactly three leaves}. Furthermore, not all phylogenetic networks are orchard networks as Fig.~\ref{orchard1}(ii) illustrates.

\begin{figure}
\center
\input{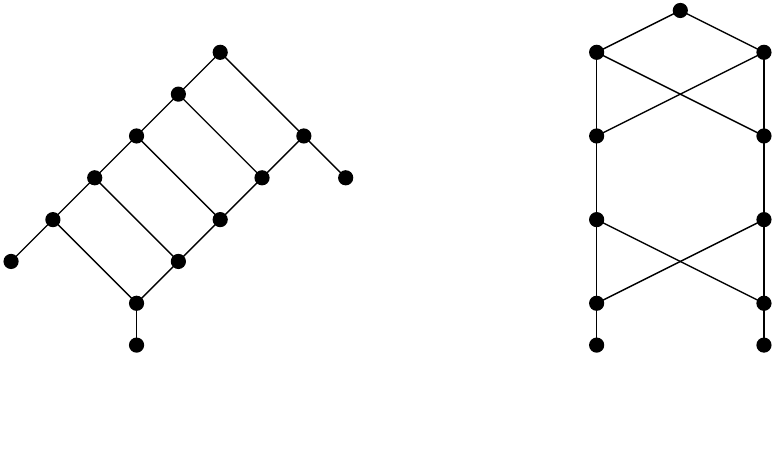_t}
\caption{(i) An orchard network and (ii) a non-orchard network.}
\label{orchard1}
\end{figure}

\blue{For this paper, a second relevant class of phylogenetic networks is the class of tree-sibling time-consistent networks. Let $\cN$ be a phylogenetic network. We say $\cN$ is {\em tree-sibling} if every reticulation has a parent that is also the parent of a tree vertex or a leaf. Furthermore, $\cN$ is {\em time-consistent} if there is a map $t$ from the vertex set of $\cN$ to the non-negative integers such that if $(u, v)$ is a reticulation arc of $\cN$, then $t(u)=t(v)$; otherwise, $t(u)< t(v)$. We refer to such a mapping as a {\em temporal labelling}. In the literature, time-consistent networks are also referred to as {\em temporal} networks. Like tree-child networks, the class of tree-sibling time-consistent networks is a proper subclass of orchard networks. For completeness, we include a proof of containment. To see that it is proper, it is shown in~\cite{car08} that, unlike orchard networks, the number of reticulations of a tree-sibling time-consistent network is bounded by the size of its leaf set.}

\blue{
\begin{lemma}
Let $\cN$ be a tree-sibling time-consistent network. Then $\cN$ is an orchard network.
\end{lemma}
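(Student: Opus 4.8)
The plan is to prove the statement by induction on the number of vertices, the engine being two facts: (i) every tree-sibling time-consistent network with at least two leaves has a cherry or a reticulated cherry, and (ii) the network obtained by the associated cherry reduction is again tree-sibling and time-consistent. Granting these, each reduction removes exactly two vertices, so repeatedly reducing yields a cherry-reduction sequence that must terminate; by (i) the terminal network has at most one leaf and hence is a single vertex, so the sequence is complete and $\cN$ is an orchard network.

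For (i), fix a temporal labelling $t$ and let $T$ be the maximum value of $t$ over the non-leaf vertices (such vertices exist, e.g.\ the root, once $|X|\ge 2$). The crucial remark is that if $v$ is any non-leaf vertex with $t(v)=T$, then every non-leaf child of $v$ is a reticulation with label $T$; indeed a tree arc leaving $v$ would produce a child of label strictly above $T$, which is impossible for a non-leaf. If no reticulation has label $T$, then such a $v$ is a tree vertex (or the root) whose two children are therefore both leaves, giving a cherry. Otherwise, following out-arcs among the label-$T$ reticulations cannot continue forever by acyclicity, so there is a reticulation $r$ of label $T$ whose child $d$ is a leaf. The tree-sibling property supplies a parent $p$ of $r$ that is also the parent of a tree vertex or leaf $w$; since $(p,r)$ is a reticulation arc we have $t(p)=T$, and maximality of $T$ forces $w$ to be a leaf. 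Thus $p$ has the two children $r$ and $w$, and $\{w,d\}$ is a reticulated cherry with reticulation leaf $d$.

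For (ii), time-consistency is easy: restricting $t$ to the reduced network still works, because suppressing a vertex of in- and out-degree one merges two arcs into one whose head is a leaf, and the required relation on $t$ is inherited. The tree-sibling property is the delicate point and the main obstacle. The vertices that disappear under a reduction are the removed leaf (for a cherry) or the reticulation $r$ (for a reticulated cherry) together with the suppressed tree vertex; none of these is the parent of any surviving reticulation, so no surviving reticulation loses a parent. What the suppressions do change is that a parent which previously had one of these vertices as a child now has a leaf there instead; since a surviving reticulation that used such a parent as its tree-sibling witness did so through the now-suppressed tree vertex, and that tree vertex is replaced by a leaf, the reticulation still has a parent that is the parent of a tree vertex or a leaf. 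Hence the reduced network remains tree-sibling.

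Finally I assemble the induction, with a single-vertex base case (vacuously orchard). Two edge cases deserve a line each. First, the vertex $p$ found in (i) could a priori be the root rather than a tree vertex; but if $p$ were the root then its only non-leaf child $r$ has merely a leaf child, so the whole network would be $\{p,r,w,d\}$ and $r$ could not acquire its second parent, a contradiction, so $p$ is a genuine tree vertex and $\{w,d\}$ is a bona fide reticulated cherry. Second, the termination claim uses that a phylogenetic network with at most one leaf is a single vertex. With these in place, the inductive step reduces $\cN$ by a cherry reduction to a smaller tree-sibling time-consistent network $\cN'$, whose complete cherry-reduction sequence (by the inductive hypothesis) is prepended with the step $\cN\to\cN'$, completing the proof.
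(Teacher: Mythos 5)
Your proof is correct and follows essentially the same route as the paper's: use a temporal labelling of maximal value together with the tree-sibling property to locate a cherry or reticulated cherry, check that a cherry reduction preserves both the tree-sibling and time-consistent properties, and induct. Your write-up is somewhat more detailed in places (the acyclicity argument for finding a maximal-label reticulation with a leaf child, and the verification that the tree-sibling witness survives suppression) where the paper simply asserts the claims, but the underlying argument is the same.
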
}

\begin{proof}
\blue{Clearly, the lemma holds if $\cN$ has no reticulations. Therefore we may assume that $\cN$ has at least one reticulation. We first show that $\cN$ has either a cherry or a reticulated cherry. Let $t$ be a temporal labelling of the vertices of $\cN$, and let $v$ be a reticulation with the property that $t(v)\ge t(v')$ for all reticulations $v'$ of $\cN$. Since $\cN$ is tree-sibling, $v$ has a parent, $u$ say, that is the parent of a vertex $w$ which is either a tree vertex or a leaf. By maximality, no reticulations are reachable from $v$ or $w$. Therefore, if two leaves are reachable from either $v$ or $w$, then $\cN$ has a cherry. If this does not occur, then $w$ is a leaf and that the (unique) child, $x$ say, of $v$ is also a leaf. In particular, $\{w, x\}$ is a reticulated cherry of $\cN$.}

\blue{To complete the proof, let $\cN'$ be obtained from $\cN$ by a cherry reduction. Clearly, $\cN'$ is also tree-sibling. Furthermore, it is easily checked that the mapping $t'$ from the vertex set of $\cN'$ to the non-negative integers given by $t'(u)=t(u)$ is a temporal labelling of $\cN'$. Thus $\cN'$ is tree-sibling time-consistent. The lemma now follows.}
\end{proof}

\noindent \blue{{\bf Main result.}} The following theorem is the main result of the paper.

\begin{theorem}
Let $\cN$ be an orchard network on $X$ with vertex set $V$. Then, up to isomorphism, $\cN$ is the unique phylogenetic network on $X$ realising $\Sigma_{\cN}$. Furthermore, up to isomorphism, $\cN$ can be reconstructed from $\Sigma_{\cN}$ in time $O(|X|^3 |V|^3)$.
\label{main1}
\end{theorem}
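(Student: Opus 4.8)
The plan is to prove both statements together by induction on the number of vertices of $\cN$, peeling off one cherry reduction at a time and showing that each step is forced by the ancestral profile alone. The engine of the argument is a pair of \emph{detection lemmas} asserting that whether a $2$-element subset $\{a,b\}\subseteq X$ is a cherry, or a reticulated cherry with a prescribed reticulation leaf, is determined by $\Sigma_{\cN}$, and that this determination is valid for \emph{every} phylogenetic network realising the profile, not merely the orchard ones. For cherries this is immediate: every path to a leaf $x$ passes through its unique parent $p_x$, so the number of paths from any interior vertex $v$ to $x$ equals the number from $v$ to $p_x$; hence two leaves satisfy $\sigma(a)=\sigma(b)$ precisely when $p_a=p_b$, the converse following from acyclicity (if $\sigma(a)=\sigma(b)$ then, taking $v=p_a$ and $v=p_b$, each of $p_a,p_b$ is reachable from the other, forcing $p_a=p_b$). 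First I would record this together with the analogous, more delicate, characterisation of reticulated cherries.

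The hard part will be the reticulated-cherry detection. Here I would work with the full profile matrix $M$ whose $(i,x)$ entry is the number of paths from $v_i$ to $x$; its columns are the tuples $\sigma(x)$ and its rows $R_{v_i}=(M_{i,x})_{x\in X}$ are equally available from $\Sigma_{\cN}$. In a reticulated cherry $\{a,b\}$ with reticulation leaf $b$, the parent $p_b$ is a reticulation whose only descendant is $b$, so $R_{p_b}=e_b$, while the parent $p_a$ is a tree vertex whose children are $a$ and $p_b$, so its only descendants are $a,p_b,b$ and $R_{p_a}=e_a+e_b$. I would then prove the converse for an arbitrary network: the existence of interior vertices playing the roles of $p_a$ and $p_b$, located from $M$ using the preliminary lemmas of Section~\ref{prelimsec} to identify the parent of a leaf and to recognise the adjacency $p_a\to p_b$, forces $\{a,b\}$ to be a reticulated cherry with reticulation leaf $b$. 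Establishing this equivalence, and in particular pinning down that the distinguished child of $p_a$ is exactly the parent of $b$ rather than some earlier ancestor on a chain of reticulations, is the main obstacle; everything downstream is bookkeeping.

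With the detection lemmas in hand, the inductive step runs as follows. By the structural property of orchard networks proved in Section~\ref{ordersec}, a non-trivial orchard network $\cN$ has a cherry or a reticulated cherry, and any cherry reduction of an orchard network is again orchard. I would locate such a reducible pair from $\Sigma_{\cN}$, reduce it to obtain an orchard network $\cN'$, and compute $\Sigma_{\cN'}$ directly from $\Sigma_{\cN}$: reducing a cherry $\{a,b\}$ deletes the column $\sigma(b)$ and the row of the suppressed common parent, whereas cutting a reticulated cherry $\{a,b\}$ replaces $\sigma(b)$ by $\sigma(b)-\sigma(a)$ and deletes the rows of the two suppressed vertices $p_a,p_b$. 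Because no path to a leaf other than $b$ uses the cut arc, these updates depend only on $\Sigma_{\cN}$ and the chosen pair, not on the ambient network. Now let $\cN''$ be any phylogenetic network with $\Sigma_{\cN''}=\Sigma_{\cN}$. The detection lemmas make $\{a,b\}$ reducible in $\cN''$ in exactly the same way; the same profile update yields $\Sigma_{\cN'''}=\Sigma_{\cN'}$ for the reduced network $\cN'''$; and the induction hypothesis applied to the orchard network $\cN'$ gives $\cN'''\cong\cN'$. Since each reduction has a canonical inverse --- re-subdivide the arc into $a$ (and, in the reticulate case, the arc into $b$) and re-attach $b$, adding the arc $p_a\to p_b$ in the reticulate case --- and this inverse is determined by the pair $\{a,b\}$ and the identity of the reticulation leaf, lifting the isomorphism $\cN'''\cong\cN'$ back through the reduction yields $\cN\cong\cN''$. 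The base case $|X|=1$ is trivial.

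Finally, for the algorithm I would turn the induction into a procedure: repeatedly scan the $O(|X|^2)$ pairs of leaves, test each against the two detection conditions by inspecting the relevant columns and candidate parent rows of $M$, perform the corresponding profile update, and push the reduction (with the pair and reticulation-leaf data) onto a stack; when the profile collapses to a single vertex, pop the stack and apply the canonical inverse operations to rebuild $\cN$. There are $O(|V|)$ reductions, since each removes interior vertices, and each reduction costs at most the time to search the pairs and, for each pair, to examine and update the $O(|X|\,|V|)$ entries of $M$ while locating the parent rows. Accounting for the re-derivation of the reduced profile at each of the $O(|V|)$ steps, a routine count bounds the total running time by $O(|X|^3|V|^3)$, as claimed, and correctness of the output is precisely the uniqueness established above.
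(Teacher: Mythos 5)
Your overall architecture is exactly the paper's: detect a cherry or reticulated cherry from the profile by a criterion valid in \emph{every} realising network, perform the reduction on both the network and the profile, invoke the induction hypothesis on the reduced network (which is again orchard by Corollary~\ref{order2}), and lift the isomorphism back through the canonical inverse; the profile-update rules you describe are the paper's reducing/cutting operations on $\Sigma$ (Lemma~\ref{reduce}), and the algorithm and running-time count coincide. The one place your sketch as written would fail is the reticulated-cherry detection: the mere existence of interior vertices $u, w$ with path-rows $R_u=e_b$ and $R_w=e_a+e_b$ does not force $(p_a,p_b)$ to be an arc. If the second child of $p_a$ is a reticulation $r_1$ whose child is another reticulation $r_2=p_b$ with child $b$, then $R_{p_a}=e_a+e_b$ and $R_{r_2}=e_b$, yet $\{a,b\}$ is not a reticulated cherry. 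You correctly flag this as the main obstacle but do not resolve it; the ingredient that does is condition (iii) of Lemma~\ref{cherry3}, namely $\left|\gamma(b)-\bigcup_{x\in X-b}\gamma(x)\right|=1$, which counts the interior vertices whose only reachable leaf is $b$ and thereby excludes precisely such chains of reticulations above $b$ (condition (ii) separately rules out a tree vertex strictly between $p_a$ and $p_b$). With Lemma~\ref{cherry3} substituted for your existence criterion, your induction, the lifting step, and the complexity analysis go through essentially verbatim as in the paper.
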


It is worth emphasising that the uniqueness of $\cN$ in the statement of Theorem~\ref{main1} is amongst all phylogenetic networks on $X$, \blue{not just within the class of orchard networks on $X$}. Furthermore, if $\cN$ is not an orchard network, then the outcome of Theorem~\ref{main1} does not necessarily hold. In particular, consider the two phylogenetic networks $\cN_1$ and $\cN_2$ in Fig.~\ref{nonorchard}. It is easily checked that by fixing an ordering of the non-leaf vertices of each of $\cN_1$ and $\cN_2$ so that the parent of $y$ is in the same position in both orderings, we have $\Sigma_{\cN_1}=\Sigma_{\cN_2}$. But $\cN_1$ is not isomorphic to $\cN_2$.

\begin{figure}
\center
\input{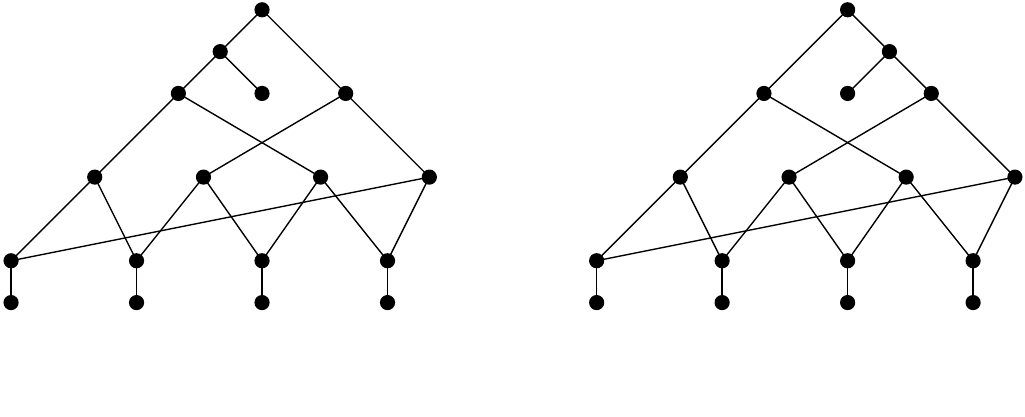_t}
\caption{Two non-isomorphic phylogenetic networks $\cN_1$ and $\cN_2$, but $\Sigma_{\cN_1}=\Sigma_{\cN_2}$.}
\label{nonorchard}
\end{figure}

Theorem~\ref{main1} generalises \blue{results} of \blue{Cardona et al.~\cite{car08} and} Cardona et al.~\cite{car09}. Let $\cN$ be a phylogenetic network on $X$ with vertex set $V$ and let $x_1, x_2, \ldots, x_n$ be a fixed ordering of the leaves in $X$. For all $v\in V-X$, the {\em path tuple} of $v$, denoted $\pi(v)$, is the $n$-tuple whose $i$-th entry is the number of paths in $\cN$ from $v$ to $x_i$. Let $\Pi_{\cN}$ denote the multiset
$$\{\pi(v): v\in V-X\}$$
of path tuples of $\cN$. \blue{If $\cN'$ is a phylogenetic network on $X$ and, up to an ordering of $X$, we have $\Pi_{\cN'}=\Pi_{\cN}$, we say {\em $\cN'$ realises $\Pi_{\cN}$}.} The next theorem \blue{was} established in~\blue{\cite{car08} and}~\cite{car09}.

\begin{theorem}
Let $\cN$ be a phylogenetic network on $X$.
\begin{enumerate}[{\rm (i)}]
\item \blue{If $\cN$ is tree-sibling time-consistent, then, up to isomorphism, $\cN$ is the unique tree-sibling time-consistent network on $X$ realising $\Pi_{\cN}$.}

\item \blue{If $\cN$ is tree-child, then,} up to isomorphism, $\cN$ is the unique tree-child network on $X$ realising $\Pi_{\cN}$.
\end{enumerate}
Furthermore, \blue{for both instances,} up to isomorphism, $\cN$ can be constructed from $\Pi_{\cN}$ in time polynomial in the size of $X$.
\label{pathtuple}
\end{theorem}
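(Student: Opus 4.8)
The plan is to deduce Theorem~\ref{pathtuple} from Theorem~\ref{main1}, rather than to reconstruct the original arguments of~\cite{car08, car09}. The first step is to place $\cN$ inside the class of orchard networks in both cases: the preceding lemma shows that every tree-sibling time-consistent network is an orchard network, while the containment of tree-child networks in the orchard class is recorded in~\cite{bor16}. Hence, under either hypothesis, Theorem~\ref{main1} applies to $\cN$ and already yields a stronger uniqueness statement, namely that $\cN$ is, up to isomorphism, the unique phylogenetic network on $X$ of \emph{any} kind realising its ancestral profile $\Sigma_{\cN}$; in particular it is the unique tree-child (respectively tree-sibling time-consistent) such network.

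The key observation is that $\Pi_{\cN}$ and $\Sigma_{\cN}$ are two presentations of the same data. Let $M$ be the integer matrix with rows indexed by $V-X$ and columns indexed by $X$ whose $(v,x)$-entry is the number of directed paths from $v$ to $x$. Then $\sigma(x)$ is the column of $M$ labelled by $x$, read in the fixed vertex ordering, whereas $\pi(v)$ is the row of $M$ labelled by $v$, read in the fixed leaf ordering. Consequently $\Sigma_{\cN}$ records $M$ up to a permutation of its rows with columns labelled by $X$, and $\Pi_{\cN}$ records precisely the multiset of rows of $M$ with columns labelled by $X$; these two presentations are interchanged by transposition. In particular, for phylogenetic networks $\cN, \cN'$ on $X$, and with no reordering of $X$, we have $\Sigma_{\cN'}=\Sigma_{\cN}$ if and only if $\Pi_{\cN'}=\Pi_{\cN}$, and each can be computed from the other in time polynomial in $|X|$ and $|V|$.

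The uniqueness claims now follow. Suppose $\cN'$ is a phylogenetic network on $X$ that realises $\Pi_{\cN}$. By definition there is a permutation $\rho$ of $X$ so that relabelling the leaves of $\cN'$ by $\rho$ produces a network $\cN''$ with $\Pi_{\cN''}=\Pi_{\cN}$ and no further reordering; the equivalence above then gives $\Sigma_{\cN''}=\Sigma_{\cN}$. Since $\cN$ is orchard, Theorem~\ref{main1} forces $\cN''\cong\cN$, and therefore $\cN'$ is isomorphic to $\cN$ up to the leaf relabelling $\rho$, which is exactly the asserted uniqueness up to isomorphism and reordering of $X$. The reconstruction is equally direct: given $\Pi_{\cN}$ with its fixed leaf ordering, transpose it to obtain $\Sigma_{\cN}$ and run the algorithm of Theorem~\ref{main1}. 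Because a binary phylogenetic network on $X$ with $r$ reticulations has $2|X|+2r-1$ vertices, and $r$ is bounded by $|X|-1$ for tree-child networks and by $2|X|-4$ for tree-sibling time-consistent networks, we have $|V|=O(|X|)$ in both cases, so the bound $O(|X|^3|V|^3)$ of Theorem~\ref{main1} becomes polynomial in $|X|$ alone, as required.

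I expect the only real care to lie in the bookkeeping around the ``up to an ordering of $X$'' clause: one must verify that the relabelling $\rho$ witnessing that $\cN'$ realises $\Pi_{\cN}$ is exactly what turns $\cN'$ into a network sharing the ancestral profile of $\cN$, so that the all-networks uniqueness of Theorem~\ref{main1} can be invoked cleanly and without circularity. Once the transpose identification of $\Pi_{\cN}$ with $\Sigma_{\cN}$ is in place, there is no further combinatorial content to supply.
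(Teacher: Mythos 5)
Your proposal is correct and follows essentially the same route as the paper, which states Theorem~\ref{pathtuple} as established in the cited works and then deduces it from Theorem~\ref{main1} via exactly the two ingredients you use: the containment of tree-sibling time-consistent networks (the preceding lemma) and tree-child networks (\cite{bor16}) in the orchard class, and the transpose equivalence between $\Sigma_{\cN}$ and $\Pi_{\cN}$. Your additional bookkeeping on the ``up to an ordering of $X$'' clause and the bound $|V|=O(|X|)$ for the polynomial-time claim fills in details the paper leaves implicit, but introduces no new ideas.
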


Let $\cN$ be a phylogenetic network on $X$ with vertex set $V$. The \red{set} $\Sigma_{\cN}$ and \red{multiset} $\Pi_{\cN}$ are equivalent in the amount of information they provide. To see this, let $x_1, x_2, \ldots, x_n$ and $v_1, v_2, \ldots, v_t$ be fixed orderings of the vertices in $X$ and $V-X$, respectively. Then, for all $i\in \{1, 2, \ldots, t\}$, the $n$-tuple $\pi(v_i)$ is the tuple whose $j$-th entry is the $i$-th entry of $\sigma(x_j)$ for all $j\in \{1, 2, \ldots, n\}$. Similarly, each \red{ordered pair} in $\Sigma_{\cN}$ can be obtained from $\Pi_{\cN}$. Thus Theorem~\ref{main1} generalises Theorem~\ref{pathtuple} in two ways. First, it shows that the latter holds for the more general class of orchard networks and, second, the uniqueness is not confined to the class of networks \blue{being constructed}.

We end the section with \blue{three} remarks. Firstly, Theorem~\ref{main1} is not the first reconstruction result concerning the class of orchard networks. Although this class was not named, it is shown in~\cite{bor16} that orchard networks are reconstructible from their so-called multiset distance matrices. See~\cite[Theorem~3.4]{bor16}. We have no doubt that, over time, the class of orchard networks will be realised to be reconstructible in other ways as well.

The second remark concerns a related, but weaker, notion to that of ancestral tuples called ancestral sets. Let $\cN$ be a phylogenetic network on $X$ with vertex set $V$. For all $x\in X$, the {\em ancestral set} of $x$ is
$$\gamma(x)=\{v\in V-X:~\mbox{$x$ is reachable from $v$}\}.$$
Thus $\gamma(x)$ is the set of non-leaf vertices $v$ in $\cN$ for which there is a directed path from $v$ to $x$. Observe that, for all $x\in X$, the root of $\cN$ is always an element of $\gamma(x)$ and so $\gamma(x)$ is non-empty. Let $\Gamma_{\cN}$ denote the \red{set}
$$\{\red{(x, \gamma(x))}: x\in X\}$$
of \red{ordered pairs}. Given $\Sigma_{\cN}$, it is clear that we can construct $\Gamma_{\cN}$ in time $O(|V|)$.

\begin{figure}
\center
\input{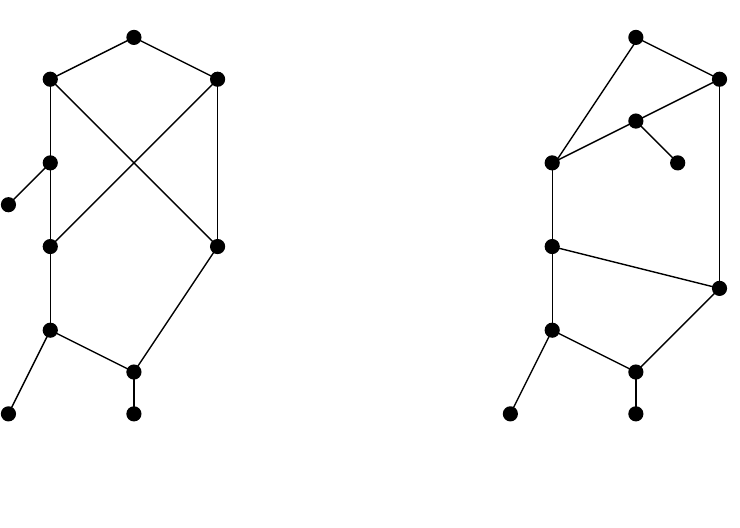_t}
\caption{Two orchard networks $\cN_1$ and $\cN_2$ with $\Gamma_{\cN_1}=\Gamma_{\cN_2}$, but $\Sigma_{\cN_1}\neq \Sigma_{\cN_2}$.}
\label{sets}
\end{figure}

To see that ancestral sets is a weaker notion than ancestral tuples, consider the two orchard networks $\cN_1$ and $\cN_2$ shown in Fig.~\ref{sets}, where the non-leaf vertices have been labelled $1, 2, \ldots, 8$. For each $i\in \{1, 2\}$, the ancestral sets of $x_1$, $x_2$, and $x_3$ are $\{1, 2, 3, 4, 5, 7\}$, $\{1, 2, \ldots, 8\}$, and $\{1, 2, 3\}$, respectively. But $\cN_1$ is not isomorphic to $\cN_2$. Note that, for a fixed ordering of $1, 2, \ldots, 8$, the ancestral tuple of $x_2$ differs in $\cN_1$ and $\cN_2$ even though the ancestral tuples of $x_1$ and $x_3$ are the same for $\cN_1$ and $\cN_2$. Nevertheless, despite this example, the ancestral sets of a phylogenetic network $\cN$ do provide some information regarding the structure of $\cN$. As this is of possible independent interest, we highlight this in the next section where the preliminary lemmas are established in terms of ancestral sets.

\blue{The third remark concerns the relationship between orchard networks and the increasingly prominent class of tree-based networks~\cite{fra15}. A phylogenetic network $\cN$ on $X$ with root $\rho$ and vertex set $V$ is {\em tree-based} if it has, as a subgraph, a rooted subtree with root $\rho$, vertex set $V$, and leaf set $X$. Note that $\rho$ in the subtree may have out-degree one. It is shown in~\cite{hub19} that the class of orchard networks is a proper subclass of tree-based networks. To see that it is proper, observe that the non-orchard networks $\cN_1$ and $\cN_2$ in Fig.~\ref{nonorchard} are both tree-based. Thus, the networks in this figure also show that Theorem~\ref{main1} does not extend to tree-based networks.}

\section{Preliminary Lemmas}
\label{prelimsec}

In this section, we establish several results that will be used in the proof of Theorem~\ref{main1}. These results show \blue{that} the ancestral sets, and thus the ancestral tuples, of an arbitrary phylogenetic network recognise and distinguish cherries and reticulated cherries.

\begin{lemma}
Let $\cN$ be a phylogenetic network on $X$, and let $a$ and $b$ be distinct elements in $X$. Then $\gamma(a)\subseteq \gamma(b)$ if and only if the parent of $b$ is reachable from the parent of $a$.
\label{parents}
\end{lemma}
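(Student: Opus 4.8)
The plan is to reduce the statement entirely to reachability between the two parents $p_a$ and $p_b$, exploiting the fact that a leaf has a unique parent. The crucial observation I would record first is that, because $a$ is a leaf, its only incoming arc is $(p_a,a)$, so every path ending at $a$ has $(p_a,a)$ as its final arc. Consequently a non-leaf vertex $v$ reaches $a$ if and only if $v=p_a$ or $v$ reaches $p_a$; that is, $\gamma(a)=\{v\in V-X : p_a \text{ is reachable from } v\}$, once we agree that every vertex is reachable from itself via the trivial path. The same description holds for $\gamma(b)$ in terms of $p_b$. I would also note at the outset that $p_a,p_b\in V-X$, since a parent of a leaf has positive out-degree, and that $p_a\in\gamma(a)$ because $(p_a,a)$ is itself a path.

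For the ``if'' direction I would assume $p_b$ is reachable from $p_a$ and take any $v\in\gamma(a)$. The observation above gives a path from $v$ to $p_a$; concatenating it with a path from $p_a$ to $p_b$ and then appending the arc $(p_b,b)$ produces a path from $v$ to $b$, so $v\in\gamma(b)$, whence $\gamma(a)\subseteq\gamma(b)$. For the ``only if'' direction I would assume $\gamma(a)\subseteq\gamma(b)$. Since $p_a\in\gamma(a)$, we get $p_a\in\gamma(b)$, so there is a path from $p_a$ to $b$; its final arc is forced to be $(p_b,b)$, and deleting it leaves a path from $p_a$ to $p_b$ (the trivial path when $p_a=p_b$), witnessing that $p_b$ is reachable from $p_a$. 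Both implications are thus just a matter of concatenating and truncating paths.

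The single point that needs care, and the only place the argument could trip, is the cherry case $p_a=p_b$, where $\gamma(a)=\gamma(b)$: here both implications come out correctly precisely because reachability is read to include the trivial path, so that $p_b$ counts as reachable from $p_a$. I would therefore fix this convention explicitly before beginning, which keeps the two directions uniform and spares me from carrying an awkward disjunction ``$p_a=p_b$ or $p_a$ reaches $p_b$'' through the proof. Beyond this boundary case, the proof uses no special structure of orchard networks at all, which is exactly what one wants, since this lemma is intended to apply to arbitrary phylogenetic networks.
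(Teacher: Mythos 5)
Your proof is correct and follows essentially the same route as the paper's: the forward direction by concatenating paths, and the converse by noting $p_a\in\gamma(a)\subseteq\gamma(b)$ forces a path from $p_a$ to $b$ whose last arc must be $(p_b,b)$. Your explicit handling of the trivial-path convention in the cherry case $p_a=p_b$ is a sensible clarification (the paper leaves it implicit, though Corollary~\ref{cherry1} depends on it), but it does not change the argument.
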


\begin{proof}
Let $p_a$ and $p_b$ denote the parents of $a$ and $b$, respectively. If $p_b$ is reachable from $p_a$, then it is clear that $\gamma(a)\subseteq \gamma(b)$. To prove the converse, suppose that $\gamma(a)\subseteq \gamma(b)$. Then $p_a\in \gamma(b)$ and so, by definition, $b$ is reachable from $p_a$. In turn, this implies that $p_b$ is reachable from $p_a$.
\end{proof}

The next corollary immediately follows from Lemma~\ref{parents} and the fact that phylogenetic networks are acyclic.

\begin{corollary}
Let $\cN$ be a phylogenetic network on $X$, and let $\{a, b\}$ be a $2$-element subset of $X$. Then $\{a, b\}$ is a cherry in $\cN$ if and only if $\gamma(a)=\gamma(b)$.
\label{cherry1}
\end{corollary}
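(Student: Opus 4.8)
The plan is to derive both implications directly from Lemma~\ref{parents}, invoking acyclicity only at the single point where it is genuinely needed. Throughout, write $p_a$ and $p_b$ for the parents of $a$ and $b$, respectively.

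For the forward implication, suppose $\{a, b\}$ is a cherry, so that $p_a=p_b$. Then $p_b$ is (trivially) reachable from $p_a$ and, symmetrically, $p_a$ is reachable from $p_b$. Applying Lemma~\ref{parents} in both directions then yields $\gamma(a)\subseteq \gamma(b)$ and $\gamma(b)\subseteq \gamma(a)$, whence $\gamma(a)=\gamma(b)$. For the converse, suppose $\gamma(a)=\gamma(b)$, and feed the two inclusions $\gamma(a)\subseteq \gamma(b)$ and $\gamma(b)\subseteq \gamma(a)$ into Lemma~\ref{parents}. This shows that $p_b$ is reachable from $p_a$ and that $p_a$ is reachable from $p_b$. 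Here is where acyclicity enters: if $p_a\neq p_b$, then concatenating the two witnessing paths produces a directed closed walk passing through two distinct vertices, contradicting the fact that $\cN$ is acyclic. Hence $p_a=p_b$, i.e.\ $\{a, b\}$ is a cherry.

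The only subtlety I would double-check is the boundary case in the reachability convention: one must confirm that a vertex counts as reachable from itself (via the trivial, length-zero path), since this is what makes Lemma~\ref{parents} applicable in the forward direction when $p_a=p_b$. This convention is already implicit in the proof of Lemma~\ref{parents} (where, from $p_a\in\gamma(b)$, the parent $p_b$ is deduced to be reachable from $p_a$ even in the degenerate case $p_a=p_b$), so fixing it consistently suffices. Beyond unwinding the definitions and using acyclicity to rule out a $2$-cycle, there is no real obstacle, which is why the statement follows immediately.
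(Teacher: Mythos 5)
Your proof is correct and follows exactly the route the paper intends: the paper states that the corollary ``immediately follows from Lemma~\ref{parents} and the fact that phylogenetic networks are acyclic,'' and your argument is precisely that deduction spelled out, including the (correct) observation that the trivial path makes Lemma~\ref{parents} applicable when $p_a=p_b$. No gaps.
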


%\marginpar{Add definition of stack-free.}
%
%\begin{lemma}
%Let $\cN$ be a stack-free phylogenetic network on $X$, and let $\{a, b\}$ be a $2$-element subset of $X$. Then $\{a, b\}$ is a reticulated cherry of $\cN$ in which $b$ is the reticulation leaf if and only if
%\begin{enumerate}[{\rm (i)}]
%\item $L(a)\subset L(b)$, and
%
%\item there is no $x\in X-b$ such that $L(a)\subset L(x)$.
%\end{enumerate}
%\label{cherry2}
%\end{lemma}

\begin{lemma}
Let $\cN$ be a phylogenetic network on $X$, and let $\{a, b\}$ be a $2$-element subset of $X$. Then $\{a, b\}$ is a reticulated cherry of $\cN$ in which $b$ is the reticulation leaf if and only if
\begin{enumerate}[{\rm (i)}]
\item $\gamma(a)\blue{\subsetneq} \gamma(b)$,

\item there is no $x\in X-b$ such that $\gamma(a)\subset \gamma(x)$, and

\item $\left|\gamma(b)-\bigcup_{x\in X-b} \gamma(x)\right|=1$.
\end{enumerate}
\label{cherry3}
\end{lemma}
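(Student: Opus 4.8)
The plan is to recast each of (i)--(iii) as a reachability statement about $\cN$ and then argue both implications using Lemma~\ref{parents} and Corollary~\ref{cherry1}. Write $p_a,p_b$ for the parents of $a,b$, and call a non-leaf vertex $v$ \emph{$b$-pendant} if $b$ is the only leaf reachable from $v$; thus $v$ is $b$-pendant exactly when $v\in\gamma(b)-\bigcup_{x\in X-b}\gamma(x)$, and (iii) says there is a unique $b$-pendant vertex. For the forward implication, suppose $\{a,b\}$ is a reticulated cherry with reticulation leaf $b$, so $p_b$ is a reticulation whose only child is $b$ and $(p_a,p_b)$ is an arc. Condition (i) is immediate from Lemma~\ref{parents} (the arc gives $\gamma(a)\subseteq\gamma(b)$), with properness witnessed by $p_b\in\gamma(b)-\gamma(a)$. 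For (ii) I would note that the only vertices reachable from $p_a$ are $a,p_b,b$, so the only non-leaf vertices reachable from $p_a$ are $p_a$ and $p_b$; hence, by Lemma~\ref{parents}, $\gamma(a)\subseteq\gamma(x)$ forces $p_x\in\{p_a,p_b\}$, i.e.\ $x\in\{a,b\}$, and the case $x=a$ gives equality by Corollary~\ref{cherry1}, so no $x\in X-b$ yields proper containment.

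It remains to prove (iii), that $p_b$ is the unique $b$-pendant vertex. Plainly $p_b$ is $b$-pendant. For uniqueness, take any $b$-pendant $v$; since $b$ has the unique parent $p_b$, $v$ reaches $p_b$, and if $v\neq p_b$ then $v$ reaches some parent of $p_b$. It cannot reach $p_a$ (else it reaches $a$), so it reaches the second parent $q$ of $p_b$. The crux is to show that $q$ reaches a leaf other than $b$, since then $v$ does too, a contradiction. Here I would use the local structure at $p_b$: its second parent $q$ is a tree vertex with a second child $q'$, and $q'$ reaches some leaf $\ell$; if $\ell=b$ then the path from $q'$ to $b$ must enter $p_b$ through $p_a$ (entering through $q$ would create a cycle with the arc $q\to q'$), so $q'$, and hence $q$, also reaches $a$. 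Either way $q$ reaches a leaf different from $b$, forcing $v=p_b$.

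For the converse, assume (i)--(iii) and let $w$ be the unique $b$-pendant vertex from (iii). Since $w$ reaches $b$ it reaches $p_b$, so $p_b$ is $b$-pendant as well and uniqueness gives $w=p_b$; moreover $p_b$ cannot be a tree vertex (its second child would reach $b$ and hence $p_b$, a cycle), so $p_b$ is a reticulation with child $b$. By (i) and Lemma~\ref{parents} there is a path $p_a=v_0\to\cdots\to v_m=p_b$ with $m\geq1$, and I would finish by showing $m=1$. If $m\geq2$ then $v_1$ is a non-leaf and the children of $p_a$ are exactly $a$ and $v_1$; condition (ii) then forces $p_a$ to reach only $a$ and $b$ (any other reachable leaf $x$ would have $p_x\neq p_a$ reachable from $p_a$, giving $\gamma(a)\subsetneq\gamma(x)$), whence $v_1$ reaches only $b$ and is $b$-pendant, so $v_1=p_b$, contradicting $m\geq2$. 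Thus $(p_a,p_b)$ is an arc and, as $p_a$ has out-degree two, $\{a,b\}$ is a reticulated cherry with reticulation leaf $b$.

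The main obstacle I anticipate is the uniqueness half of (iii): in the forward direction, excluding a second $b$-pendant vertex that reaches $p_b$ through its \emph{other} parent $q$. This is exactly the point that depends on the local structure around the reticulation $p_b$ --- that $q$ really does lead to a leaf different from $b$ --- and it is where acyclicity of $\cN$ together with $b$ having a single parent carries the argument; the remaining steps reduce, via Lemma~\ref{parents} and Corollary~\ref{cherry1}, to routine reachability bookkeeping.
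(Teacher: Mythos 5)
Your converse argument is sound and follows essentially the same route as the paper's: first show $p_b$ is a reticulation, then rule out intermediate vertices on the path from $p_a$ to $p_b$ (you inspect the vertex just after $p_a$, the paper inspects the vertex just before $p_b$; both work, using (ii) against a tree vertex and (iii) against a reticulation). The problem is in the forward direction, at exactly the point you flagged as the crux. You assert that the second parent $q$ of $p_b$ ``is a tree vertex with a second child $q'$''. Nothing in the definition of a reticulated cherry forces this: $q$ may itself be a reticulation, in which case its only child is $p_b$, the only leaf reachable from $q$ is $b$, and $q$ is a second $b$-pendant vertex. Your subsequent case analysis on the leaf reached by $q'$ then has nothing to apply to, and the gap cannot be patched, because in this configuration condition (iii) genuinely fails.

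Concretely, take leaves $\{a,b,c,d\}$, non-leaf vertices $\rho,u,v,w,q,p_a,p_b$, and arcs $(\rho,u)$, $(\rho,v)$, $(u,p_a)$, $(u,w)$, $(v,c)$, $(v,q)$, $(w,d)$, $(w,q)$, $(q,p_b)$, $(p_a,a)$, $(p_a,p_b)$, $(p_b,b)$. This is a phylogenetic network (in fact an orchard network), $\{a,b\}$ is a reticulated cherry with reticulation leaf $b$, and conditions (i) and (ii) hold, yet $\gamma(b)-\bigcup_{x\in X-b}\gamma(x)=\{q,p_b\}$ has two elements. So the ``only if'' implication of the lemma is false as stated whenever the second parent of $p_b$ is a reticulation; the paper's proof conceals this by dismissing that direction as ``easily checked''. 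Your attempt to actually check it has therefore exposed an error in the statement rather than merely a gap in your own reasoning: the characterisation requires either an additional hypothesis excluding a reticulation parent of $p_b$, or a criterion using more of the ancestral profile than the ancestral sets alone. (Note also that this is the direction the reconstruction algorithm relies on to guarantee that some pair satisfying (I) or (II) can always be found.)
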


\begin{proof}
Let $p_a$ and $p_b$ denote the parents of $a$ and $b$, respectively. It is easily checked that if $\{a, b\}$ is a reticulated cherry in which $b$ is the reticulation leaf, then (i)--(iii) hold. So suppose that (i)--(iii) hold. Since (i) holds, it follows by Lemma~\ref{parents} that there is a directed path $P$ in $\cN$ from $p_a$ to $p_b$. If $p_b$ is a tree vertex, then $\cN$ has a leaf, \blue{$c$} say, reachable from $p_b$ such that $\blue{c}\neq b$. This implies that $\gamma(a)\subset \gamma(\blue{c})$, contradicting (ii). Therefore $p_b$ is a reticulation. Lastly, assume $(p_a, p_b)$ is not an arc in $\cN$. Let $u$ denote the vertex on $P$ immediately prior to $p_b$. If $u$ is a tree vertex, then $\cN$ has a leaf $\blue{c'}\neq b$ reachable from $u$ with $\gamma(a)\subset \gamma(\blue{c'})$, contradicting (ii). On the other hand, if $u$ is a reticulation, then
$$\left|\gamma(b)-\bigcup_{x\in X-b} \gamma(x)\right|\ge 2,$$
contradicting (iii). Thus $(p_a, p_b)$ is an arc and so $\{a, b\}$ is a reticulated cherry in which $b$ is the reticulation leaf.
\end{proof}

\section{Order Does Not Matter}
\label{ordersec}

Let $\cN$ be an orchard network. Then, by definition, there exists a complete cherry-reduction sequence for $\cN$. But, how do we find such a sequence and does the order in which we apply the cherry reductions matter? The next proposition says that if we take $\cN$ and repeatedly apply cherry reductions until no more is possible, we always construct a complete cherry-reduction sequence. \blue{A vertex on a directed path is {\em non-terminal} if it is neither the first nor last vertex on the path.}

\begin{proposition}
Let $\cN$ be an orchard network, and let
\begin{align}
\cN=\cN_0, \cN_1, \cN_2, \ldots, \cN_{\ell}
\label{seq2}
\end{align}
be a maximal sequence of cherry reductions. Then this sequence is complete.
\label{order1}
\end{proposition}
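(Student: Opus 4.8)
The plan is to reduce Proposition~\ref{order1} to a single-step statement and then prove that statement by a confluence (``diamond'') argument. The single-step statement, which I will call the \emph{Key Lemma}, is: if $\cN$ is an orchard network and $\cN'$ is obtained from $\cN$ by one cherry reduction, then $\cN'$ is again an orchard network. Granting this, the proposition follows quickly. Along the maximal sequence~(\ref{seq2}), a routine induction using the Key Lemma shows that every $\cN_i$ is orchard; in particular the terminal network $\cN_{\ell}$ is orchard. If $\cN_{\ell}$ had more than one vertex, then, being orchard, it would admit a complete cherry-reduction sequence of positive length, and the first step of that sequence would require $\cN_{\ell}$ to contain a cherry or a reticulated cherry---contradicting the maximality of~(\ref{seq2}). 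Hence $\cN_{\ell}$ is a single vertex and the sequence is complete.

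To prove the Key Lemma I would induct on the number of vertices of $\cN$. Since $\cN$ is orchard with more than one vertex, I fix a complete cherry-reduction sequence and let $\cN\to\cM$ be its first reduction, so that $\cM$ is orchard and has a strictly shorter complete sequence. Let $\cN\to\cN'$ be the given (arbitrary) reduction. If it coincides with $\cN\to\cM$, there is nothing to prove. If the two reductions act on the same cherry but retain different leaves, then $\cN'$ and $\cM$ are identical as graphs up to the label of the single retained leaf; since being orchard depends only on the graph structure and is invariant under relabelling a leaf, $\cN'$ is orchard because $\cM$ is. In all remaining cases I would invoke a local-confluence sublemma (below) to produce a phylogenetic network $\cN''$ reachable by cherry reductions from both $\cM$ and $\cN'$, say $\cM\to^{*}\cN''$ and $\cN'\to^{*}\cN''$. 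Applying the induction hypothesis step by step along $\cM\to^{*}\cN''$ (every intermediate network has fewer vertices than $\cN$) shows $\cN''$ is orchard; prepending the reductions $\cN'\to^{*}\cN''$ to a complete sequence of $\cN''$ then exhibits a complete sequence for $\cN'$, so $\cN'$ is orchard.

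The heart of the matter, and the step I expect to be the main obstacle, is the diamond sublemma: apart from the ``same cherry, different retained leaf'' case, any two distinct cherry reductions $\cN\to\cN_1$ and $\cN\to\cN_2$ have a common descendant under cherry reductions. When the local neighbourhoods targeted by the two reductions---the relevant parents, the deleted or suppressed vertices, and the arcs created by suppression---are disjoint, the reductions commute and the diamond closes in one step on each side. The work lies in the overlapping configurations, which I would organise by type: two cherries (here distinct cherries are in fact always disjoint, since a tree vertex has out-degree two, so only the relabelling case can overlap), a cherry together with a reticulated cherry, and two reticulated cherries sharing either the tree-vertex parent $p_a$ or the reticulation vertex $p_b$ and its leaf. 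In each overlapping configuration I would check directly from the definitions of cherry and reticulated cherry that a common network is reachable. The subtle point is that cutting a reticulated cherry can \emph{create} a new cherry---for instance when a single reticulation is fed by two tree vertices, each forming a reticulated cherry with the reticulation's leaf, cutting one arc leaves the other tree vertex with two leaf children. In such cases the diamond closes only after a further reduction on each side, and one must verify that these newly created cherries can be reduced (by a suitable choice of retained leaf) so that the two sides land on exactly the same graph.
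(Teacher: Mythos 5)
Your proposal is correct, but it proves the proposition by a genuinely different route from the paper. You invert the logical order: you first establish single-step closure (your Key Lemma, which is exactly Corollary~\ref{order2}) by induction on the number of vertices together with a local-confluence (diamond) analysis, and then deduce Proposition~\ref{order1} from it. The paper does the opposite: it proves the proposition directly by a global argument --- running alongside~(\ref{seq2}) a sequence of subdivision graphs $\cM_i$ in which deleted arcs and leaves are recorded but degree-two vertices are never suppressed, showing that every non-terminal vertex of $\cM_\ell$ on a path from the root to a surviving reticulation has degree three, and then deriving a contradiction from the ordering in which a known complete sequence of $\cN$ cuts its reticulation arcs --- and only afterwards reads off Corollary~\ref{order2} as an immediate consequence. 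Your route is the standard Newman's-lemma pattern for a terminating rewriting system, and its deferred case analysis does go through as you anticipate: distinct cherries are vertex-disjoint, a cherry and a reticulated cherry cannot share a leaf, and two distinct reticulated cherries can overlap only by sharing their reticulation vertex (two tree-vertex parents, each with a leaf child, feeding one reticulation whose child is a leaf $b$); in that case each cut turns the other pair into a cherry, and deleting $b$ on both sides closes the diamond in two steps, exactly as you describe. What each approach buys: yours makes the commutation structure of cherry reductions explicit and delivers Corollary~\ref{order2} independently rather than as a corollary, at the price of a finite but fiddly verification of the overlapping configurations; the paper's argument avoids that case analysis entirely but requires the auxiliary $\cM_i$ construction and a less transparent minimal-index contradiction. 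To turn your proposal into a complete proof you would still need to write out the confluence cases in full, but there is no step that fails.
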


\begin{proof}
Let $X$ denote the leaf set of $\cN$, and suppose (\ref{seq2}) is not complete. Paralleling (\ref{seq2}), we begin by constructing a sequence
$$\cN=\cM_0, \cM_1, \cM_2, \ldots, \cM_{\ell}$$
of rooted acyclic directed graphs as follows. If $\cN_1$ is obtained from $\cN_0$ by reducing a leaf of a cherry, then $\cM_1$ is obtained from $\cM_0$ by deleting the same leaf but not suppressing the resulting vertex of in-degree one and out-degree one. Similarly, if $\cN_1$ is obtained from $\cN_0$ by cutting a reticulated cherry, then $\cM_1$ is obtained from $\cM_0$ by deleting the same reticulation arc but not suppressing the two resulting vertices of in-degree one and out-degree one. More generally, if $\cN_i$ is obtained from $\cN_{i-1}$ by reducing a leaf of a cherry, that is, deleting a leaf $b$ say and suppressing its parent $p_b$, then $\cM_i$ is obtained from $\cM_{i-1}$ by deleting $b$ as well as deleting every non-terminal vertex on the (unique) path from $p_b$ to $b$ in $\cM_{i-1}$. Note that each of these non-terminal vertices has in-degree one and out-degree one in $\cM_{i-1}$. On the other hand, if $\cN_i$ is obtained from $\cN_{i-1}$ by cutting a reticulated cherry, that is, deleting a reticulation arc $(p_a, p_b)$ and suppressing $p_a$ and $p_b$, then $\cM_i$ is obtained from $\cM_{i-1}$ by deleting $(p_a, p_b)$. Observe that, for all $i$, if we suppress every vertex in $\cM_i$ of in-degree one and out-degree one, we obtain $\cN_i$. Thus $\cM_i$ is a subdivision of $\cN_i$ for all $i$, \blue{that is, $\cN_i$ can be obtained from $\cM_i$ by suppressing all vertices of in-degree one and out-degree one for all $i$}. Furthermore, as (\ref{seq2}) is not complete, the root $\rho$ of $\cN$ is never deleted and so, for all $i$, the root of $\cM_i$ is also $\rho$ and has out-degree two in $\cM_i$.

We now analyse $\cM_{\ell}$. Since (\ref{seq2}) is maximal and not complete, $\cN_{\ell}$ has at least one reticulation. This implies that $\cM_{\ell}$ has at least one vertex of in-degree two and out-degree one. We next show that every non-terminal vertex in $\cM_{\ell}$ on a path from $\rho$ to a vertex of in-degree two and out-degree one has degree three.

\begin{sublemma}
Let $v$ be a vertex of in-degree two and out-degree one in $\cM_{\ell}$. If $u$ is a non-terminal vertex of $\cM_{\ell}$ on a path in $\cM_{\ell}$ from $\rho$ to $v$, then $u$ has \blue{degree three} in $\cM_{\ell}$.
\label{three}
\end{sublemma}

\begin{proof}
Suppose $u$ is a vertex of in-degree one and out-degree one on a path from $\rho$ to $v$ in $\cM_{\ell}$. In $\cN$, the vertex $u$ has degree three. Therefore, for some $i\in \{1, 2, \ldots, \ell\}$, we have that $\cN_i$ is obtained from $\cN_{i-1}$ by a cherry reduction in which an arc incident with $u$ is deleted. Now, as $v$ is a vertex of in-degree two and out-degree one in $\cM_{\ell}$, it follows that $v$ is a reticulation in $\cN_{\ell}$, and therefore a reticulation in $\cN_i$. Thus there is a path $P$ in $\cN_i$ from $u$ to $v$. It is now easily checked that no cherry reduction applied to $\cN_{i-1}$ in which an arc incident with $u$ and not lying on $P$ is deleted is possible. Hence $u$ has degree-three.
\end{proof}

We now complete the proof of the proposition. Since $\cN$ is orchard, there is a sequence
\begin{align*}
\cN=\cN'_0, \cN'_1, \cN'_2, \ldots, \cN'_k
\end{align*}
of cherry reductions such that $\cN'_k$ consists of a single vertex. Let $i$ be the smallest index such that $\cN'_i$ is obtained from $\cN'_{i-1}$ by cutting a reticulated cherry in which the deleted reticulation arc, $(u, v)$ say, has the property that $v$ is in $\cM_{\ell}$ and it has in-degree two and out-degree one in $\cM_{\ell}$. Observe that, by the choice of $i$, no vertex of in-degree two and out-degree one is reachable from $v$ in $\cM_{\ell}$ except $v$ itself. As (\ref{seq2}) is maximal, this implies that there is a unique vertex, $\ell_v$ say, in $X$ that is reachable from $v$ in $\cM_{\ell}$.

Now, $u$ is a tree vertex in $\cN'_{i-1}$ whose other child, in addition to $v$, is a leaf. By~(\ref{three}), $u$ has degree-three in $\cM_{\ell}$. Furthermore, as $u$ is a tree vertex in $\cN'_{i-1}$, it follows that $u$ has in-degree one and out-degree two in $\cM_{\ell}$. Let $w$ denote the child of $u$ in $\cM_{\ell}$ that is not $v$. At least one vertex in $X$ is reachable from $w$ in $\cM_{\ell}$ and this vertex is not $\ell_v$. If, in $\cM_{\ell}$, there is no vertex reachable from $w$ with in-degree two and out-degree one, then (\ref{seq2}) is not maximal. Therefore, in $\cM_{\ell}$ there is such a vertex $w'$ reachable from $w$. In $\cN$, the vertex $w'$ is a reticulation, and so there is a $j\in \{1, 2, \ldots, k\}$ such that $\cN'_j$ is obtained from $\cN'_{j-1}$ by cutting a reticulated cherry in which a reticulation arc directed into $w'$ is deleted. Since $(u, v)$ is the reticulation arc directed into $v$ that is deleted, it follows $j< i$. But, by the choice of $i$, we have $i< j$; a contradiction. We conclude that (\ref{seq2}) is complete.
\end{proof}

The following corollary is an immediate consequence of Proposition~\ref{order1}.

\begin{corollary}
Let $\cN$ be an orchard network, and let $\{a, b\}$ be a cherry or a reticulated cherry of $\cN$. If $\cN'$ is obtained from $\cN$ by reducing $b$ if $\{a, b\}$ is a cherry or cutting $\{a, b\}$ if $\{a, b\}$ is a reticulated cherry, then $\cN'$ is an orchard network.
\label{order2}
\end{corollary}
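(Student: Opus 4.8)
The plan is to show that $\cN'$ admits a complete cherry-reduction sequence, which is precisely what it means for $\cN'$ to be an orchard network, and to obtain such a sequence from a maximal sequence of cherry reductions of $\cN$ via Proposition~\ref{order1}. First I would observe that, since $\cN'$ is obtained from $\cN$ by a single cherry reduction (reducing $b$ in the cherry case, cutting $\{a,b\}$ in the reticulated-cherry case), the two-term sequence $\cN=\cN_0, \cN_1=\cN'$ is a cherry-reduction sequence of $\cN$. I would then extend this to a \emph{maximal} cherry-reduction sequence $\cN=\cN_0, \cN_1=\cN', \cN_2, \ldots, \cN_{\ell}$ by repeatedly applying an arbitrary available cherry reduction until $\cN_{\ell}$ has neither a cherry nor a reticulated cherry. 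This process terminates because each cherry reduction strictly decreases the number of vertices, so a maximal extension always exists.

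The key step is to invoke Proposition~\ref{order1}. Because $\cN$ is an orchard network and the sequence just constructed is a maximal sequence of cherry reductions of $\cN$, the proposition guarantees that this sequence is complete; that is, $\cN_{\ell}$ consists of a single vertex. Now I would pass to the tail $\cN'=\cN_1, \cN_2, \ldots, \cN_{\ell}$. Each step of this tail is still a single cherry reduction, so the tail is a cherry-reduction sequence of $\cN'$, and its final term $\cN_{\ell}$ is a single vertex. By definition, this is a complete cherry-reduction sequence for $\cN'$, and hence $\cN'$ is an orchard network, as required.

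I do not expect any genuine obstacle in this argument: the corollary really is immediate once Proposition~\ref{order1} is in hand, since all of the substance is carried by that proposition (in particular, the order-independence of the reductions). The only points meriting a moment's care are that the maximal extension exists (clear from the monotone decrease in the vertex count) and that a suffix of a cherry-reduction sequence is again a cherry-reduction sequence, both of which are routine.
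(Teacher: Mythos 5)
Your argument is correct and is exactly the intended one: the paper simply declares the corollary ``an immediate consequence of Proposition~\ref{order1}'', and your proof is the straightforward unpacking of that claim (extend the one-step reduction to a maximal sequence, apply the proposition to conclude it is complete, and observe that the suffix is a complete cherry-reduction sequence for $\cN'$). No gaps.
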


Since deciding if a given pair of leaves of a phylogenetic network is either a cherry or a reticulated cherry takes constant time and a cherry reduction also takes constant time, the last corollary gives a polynomial-time algorithm for deciding if an arbitrary phylogenetic network $\cN$ is orchard. In particular, repeatedly find a cherry or a reticulated cherry, and apply the appropriate cherry reduction until this process is no longer possible. This takes at most $O(|V|)$ iterations, where $V$ is the vertex of $\cN$. If at the completion of this process, we have a phylogenetic network consisting of a single vertex, then $\cN$ is orchard; otherwise, $\cN$ is not orchard. \blue{Observe that if $\cN$ is orchard with $n$ leaves and $k$ reticulations, then this process consists of $n+k-1$ cherry reductions.}

\section{Proof of Theorem~\ref{main1}}
\label{proofsec}

In this section, we prove Theorem~\ref{main1}. For a phylogenetic network $\cN$, Corollary~\ref{cherry1} and Lemma~\ref{cherry3} show that it is straightforward to recognise cherries and reticulated cherries of $\cN$ using only the ancestral sets, and thus the ancestral tuples, of $\cN$. This fact is freely used throughout this section. We next describe two operations on tuples that parallel the operations of reducing a cherry and cutting a reticulated cherry.

Let $X$ be a non-empty finite set and, for some fixed $t$, let
$$\Sigma=\{\red{(x, \sigma(x))}: x\in X\}$$
be a \red{set of ordered pairs, where, for all $x\in X$, we have that $\sigma(x)$ is a $t$-tuple} whose entries are either non-negative integers or $-$. Note that the symbol $-$ is going to be used as a placeholder. Let $\{a, b\}$ be a $2$-element subset of $X$. The first operation will be used only in association with reducing $b$ when $\{a, b\}$ is a cherry. Let $j\in \{1, 2, \ldots, t\}$ such that $\sigma_j(a)=\sigma_j(b)=1$, but $\sigma_j(x)=0$ for all $x\in X-\{a, b\}$. Let $\Sigma'$ be the \red{set of $|X-b|$ ordered pairs} obtained from $\Sigma$ \red{as follows.} For all $x\in X-b$, \red{set $\sigma'(x)$ so that the $i$-th entry is}
$$\sigma'_i(x)=
\begin{cases}
\sigma_i(x), & \mbox{if $i\neq j$;} \\
-, & \mbox{if $i=j$.}
\end{cases}
$$
\red{Set $\Sigma'=\{(x, \sigma'(x)): x\in X-b\}$.} We say that $\Sigma'$ has been obtained from $\Sigma$ by {\em reducing $b$}.

The second operation will be used only in association with cutting $\{a, b\}$ when $\{a, b\}$ is a reticulated cherry in which $b$ is the reticulation leaf. Let $j\in \{1, 2, \ldots, t\}$ such that $\sigma_j(a)=1=\sigma_j(b)$ but $\sigma_j(x)=0$ for all $x\in X-\{a, b\}$, and let $k\in \{1, 2, \ldots, t\}$ such that $\sigma_k(b)=1$ but $\sigma_k(x)=0$ for all $x\in X-b$. Let $\Sigma'$ be the \red{set of $|X|$ ordered pairs} obtained from $\Sigma$ \red{as follows.} For all $x\in X-b$, \red{set $\sigma'(x)$ so that the $i$-th entry is}
$$\sigma'_i(x)=
\begin{cases}
\sigma_i(x), & \mbox{if $i\not\in \{j, k\}$;} \\
-, & \mbox{if $i\in \{j, k\}$;}
\end{cases}
$$
and \red{set $\sigma'(b)$ so that the $i$-th entry is}
$$\sigma'_i(b)=
\begin{cases}
\sigma_i(b)-\sigma_i(a), & \mbox{if $i\not\in \{j, k\}$;} \\
-, & \mbox{if $i\in \{j, k\}$.}
\end{cases}
$$
\red{Set $\Sigma'=\{(x, \sigma'(x): x\in X\}$.} We say that $\Sigma'$ has been obtained from $\Sigma$ by {\em cutting $\{a, b\}$}.

\begin{lemma}
Let $\cN$ be a phylogenetic network on $X$ \blue{with vertex set $V$ and} $|X|\ge 2$, \blue{and fix an ordering of $V-X$}. Let $\{a, b\}$ be a $2$-element subset of $X$.
\begin{enumerate}[{\rm (i)}]
\item If $\{a, b\}$ is a cherry of $\cN$, then, up to \blue{entries with symbol $-$}, the \red{set of ordered pairs} obtained from $\Sigma_{\cN}$ by reducing $b$ is the ancestral profile of the phylogenetic network $\cN'$ obtained from $\cN$ by reducing $b$.

\item If $\{a, b\}$ is a reticulated cherry of $\cN$ in which $b$ is the reticulation leaf, then, up to \blue{entries with symbol $-$}, the \red{set of ordered pairs} obtained from $\Sigma_{\cN}$ by cutting $\{a, b\}$ is the ancestral profile of the phylogenetic network $\cN'$ obtained from $\cN$ by cutting $\{a, b\}$.
\end{enumerate}
\label{reduce}
\end{lemma}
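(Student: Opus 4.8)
The plan is to track, for each vertex and each leaf that survives the network operation, how its path count changes, and to match this against the tuple operation coordinate by coordinate. Throughout I will use the elementary fact that suppressing a vertex of in-degree one and out-degree one, and deleting an arc or a leaf all of whose descendants are themselves being deleted, leaves unchanged the number of directed paths between any two surviving vertices.

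First I would pin down the indices used by the tuple operations. For (i), let $p$ be the common parent of the cherry $\{a,b\}$, occupying position $j$ in the ordering of $V-X$. Since every path to $a$ (resp.\ $b$) terminates with the arc $p\rightarrow a$ (resp.\ $p\rightarrow b$), we have $\sigma_j(a)=\sigma_j(b)=1$, and $p$ reaches no leaf other than $a,b$, so $\sigma_j(x)=0$ for all $x\in X-\{a,b\}$; thus $j$ is a legitimate choice for reducing $b$. For (ii), let $p_a$ (a tree vertex) and $p_b$ (the reticulation) be the parents of $a$ and $b$, in positions $j$ and $k$. As $p_a$ reaches only $a$ and $b$, via the arc $p_a\rightarrow a$ and the path $p_a\rightarrow p_b\rightarrow b$, we get $\sigma_j(a)=\sigma_j(b)=1$ and $\sigma_j(x)=0$ otherwise; as $p_b$ reaches only the leaf $b$, we get $\sigma_k(b)=1$ and $\sigma_k(x)=0$ otherwise. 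Hence $j$ and $k$ are legitimate choices for cutting $\{a,b\}$.

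Next I would dispose of the ``unchanged'' coordinates. In case (i), reducing $b$ deletes $b$ and suppresses $p$; deleting $b$ removes only paths terminating at $b$, hence none from a surviving vertex $v_i$ $(i\neq j)$ to a surviving leaf $x\in X-b$, and suppressing $p$ preserves all path counts between surviving vertices. So the number of paths from $v_i$ to $x$ agrees in $\cN$ and $\cN'$, whence the $i$-th entry of $\sigma'(x)$ equals that count while the deleted $j$-th coordinate is marked $-$; this proves (i). In case (ii), the same reasoning applies verbatim to every $x\in X-b$: deleting the arc $(p_a,p_b)$ removes only paths ending at $b$ (the sole descendant of $p_b$), and suppressing $p_a,p_b$ preserves the remaining counts, so $\sigma'_i(x)=\sigma_i(x)$ equals the count in $\cN'$ for all $i\notin\{j,k\}$.

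The crux is the new tuple of the reticulation leaf $b$ in case (ii). Fix a surviving vertex $v_i$, $i\notin\{j,k\}$. Since $b$ is a leaf, every path from $v_i$ to $b$ ends with the arc $p_b\rightarrow b$, so their number equals the number of paths from $v_i$ to $p_b$. Splitting these by the last arc into $p_b$, namely $p_a\rightarrow p_b$ or $q\rightarrow p_b$ where $q$ is the second parent of $p_b$, the paths through $p_a$ correspond bijectively to paths from $v_i$ to $p_a$ (each extends uniquely by $p_a\rightarrow p_b$), and these in turn correspond to paths from $v_i$ to $a$ since $a$ is a leaf whose only parent is $p_a$; thus there are $\sigma_i(a)$ of them. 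Consequently the number of paths from $v_i$ to $q$ is $\sigma_i(b)-\sigma_i(a)$. In $\cN'$ the parent of $b$ is $q$ after the suppression of $p_b$, and since neither $p_a$ nor $p_b$ lies on any path to $q$ (both reach only the leaves $a,b$), the count of paths from $v_i$ to $q$ is unchanged; hence the number of paths from $v_i$ to $b$ in $\cN'$ equals $\sigma_i(b)-\sigma_i(a)=\sigma'_i(b)$, as required by the definition of cutting. I expect the bookkeeping in this last step, verifying that $p_a$ and $p_b$ are irrelevant to paths reaching $q$ and that the two-parent split produces exactly the subtraction $\sigma_i(b)-\sigma_i(a)$, to be the main obstacle.
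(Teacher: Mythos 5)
Your path-count bookkeeping is correct and in fact fills in the part of the argument that the paper dismisses as ``easily checked'': the last-arc decomposition at $p_b$ giving $\sigma_i(b)-\sigma_i(a)$ paths from $v_i$ to the second parent $q$, and the observation that deleting $(p_a,p_b)$ and suppressing the two degree-two vertices cannot affect paths to surviving leaves, are exactly right. However, there is a genuine gap at the very first step, and it is the one point the paper's proof actually spends its effort on. The tuple operations ``reducing $b$'' and ``cutting $\{a,b\}$'' are defined for an \emph{arbitrary} index $j$ with $\sigma_j(a)=\sigma_j(b)=1$ and $\sigma_j(x)=0$ otherwise (and an arbitrary such $k$), not for the particular index occupying the position of $p_a$ (resp.\ $p_b$). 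These candidate sets need not be singletons: for instance, a reticulation whose unique child is the cherry parent $p$ also has exactly one path to each of $a$ and $b$ and reaches no other leaf, so it is an equally legitimate choice of $j$. This matters because the reconstruction algorithm selects $j$ and $k$ from the profile alone and has no way of knowing which candidate is the true parent; a proof that only covers the choice $j=p_a$, $k=p_b$ does not justify the lemma as it is used.

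The missing ingredient is the observation that every legitimate candidate for $j$ has the \emph{identical} path-tuple (one path to $a$, one to $b$, zero elsewhere), and every legitimate candidate for $k$ likewise (one path to $b$, zero elsewhere). Hence permuting the vertex labels within each candidate set --- the sets the paper calls $U_a$ and $U_b$ --- leaves $\Sigma_{\cN}$ unchanged, and one may relabel so that $p_a$ receives label $v_j$ and $p_b$ receives label $v_k$, reducing the general case to the one you proved. Adding this relabelling step would make your argument complete and essentially coincide with the paper's proof.
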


\begin{proof}
We prove the lemma for (ii). The proof of the lemma for (i) is similar, but easier, and omitted. Suppose $\{a, b\}$ is a reticulated cherry of $\cN$ in which $b$ is the reticulation leaf, and $\cN'$ is obtained from $\cN$ by cutting $\{a, b\}$. Let $\Sigma'$ be the \red{set of ordered pairs} obtained from $\Sigma_{\cN}$ by cutting $\{a, b\}$. We will show that $\Sigma'$ is the ancestral profile of a phylogenetic network isomorphic to $\cN'$.

Let $V$ denote the vertex set of $\cN$, and fix an ordering $v_1, v_2, \ldots, v_t$ of the vertices in $V-X$. Let $p_a$ and $p_b$ denote the parents of $a$ and $b$, respectively, in $\cN$. Set
$$U_a=\{v_j\in V-X:~\mbox{$\sigma_j(a)=1=\sigma_j(b)$, $\sigma_j(x)=0$ for all $x\in X-\{a, b\}$}\}$$
and
$$U_b=\{v_k\in V-X:~\mbox{$\sigma_k(b)=1$, $\sigma_k(x)=0$ for all $x\in X-b$}\}.$$
Observe that $U_a$ and $U_b$ are both non-empty as $p_a\in U_a$ and $p_b\in U_b$, but $U_a\cap U_b$ is empty.

Now consider $\Sigma'$. To obtain $\Sigma'$ from $\Sigma_{\cN}$, we chose (i) an entry in $\sigma(a)$, say $j$, such that $\sigma_j(a)=1=\sigma_j(b)$ but $\sigma_j(x)=0$ for all $x\in X-\{a, b\}$, and (ii) an entry in $\sigma(b)$, say $k$, such that $\sigma_k(b)=1$ but $\sigma_k(x)=0$ for all $x\in X-b$. In particular, these chosen entries correspond to vertices, $v_j$ and $v_k$ say, in $U_a$ and $U_b$, respectively.

Let $\cN_1$ denote the phylogenetic network obtained from $\cN$ by bijectively relabelling the vertices in $U_a$ with the vertices in $U_a$ so that $p_a$ is relabelled $v_j$, and bijectively relabelling the vertices in $U_b$ with the vertices in $U_b$ so that $p_b$ is relabelled $v_k$. Clearly, $\cN_1$ is isomorphic to $\cN$ and $\Sigma_{\cN}$ is the ancestral profile of $\cN_1$. Furthermore, it is easily checked that, up to isomorphism, $\Sigma'$ is the ancestral profile of the phylogenetic network $\cN'_1$ obtained from $\cN_1$ by cutting $\{a, b\}$. But $\cN'_1$ is isomorphic to $\cN'$, thereby completing the proof of the lemma.
\end{proof}

With Lemma~\ref{reduce} in hand, we next prove the uniqueness part of Theorem~\ref{main1}

\begin{proof}[Proof of the uniqueness part of Theorem~\ref{main1}.]
The proof is by induction on the sum of the number $n$ of leaves and the number \blue{$k$} of reticulations in $\cN$. If $n+k=1$, then $n=1$ and \blue{$k=0$}, and $\cN$ consists of the single vertex in $X$, and so uniqueness holds. If $n+k=2$, then, as $\cN$ is orchard, $n=2$ and $k=0$, in which case, $\cN$ consists of two leaves attached to the root. Again, uniqueness holds. Now suppose that $n+k\ge 3$ and the uniqueness holds for all orchard networks for which the sum of the number of leaves and the number of reticulations is at most $n+k-1$. Note that, as $\cN$ is orchard, $n\ge 2$.

Since $\cN$ is orchard, it has either a cherry or a reticulated cherry. Thus, by Corollary~\ref{cherry1} and Lemma~\ref{cherry3}, it is possible to find a $2$-element subset $\{a, b\}$ of $X$ using only $\Sigma_{\cN}$ such that $\{a, b\}$ is either a cherry or a reticulated cherry of $\cN$. If the latter, we can also determine from $\Sigma_{\cN}$ which of $a$ and $b$ is the reticulation. Without loss of generality, we may assume $b$ is the reticulation leaf. Depending on whether $\{a, b\}$ is a cherry or a reticulated cherry, let $\cN'$ be obtained from $\cN$ by reducing $b$ or cutting $\{a, b\}$, respectively, and let $\Sigma'$ be the \red{set of ordered pairs} obtained from $\Sigma_{\cN}$ by reducing $b$ or cutting $\{a, b\}$, respectively. Regardless of the way $\cN'$ and $\Sigma'$ are obtained, it follows by Corollary~\ref{order2} and Lemma~\ref{reduce} that $\cN'$ is an orchard network and, up to isomorphism, $\Sigma'$ is the ancestral profile of $\cN'$. Furthermore, $\cN'$ has either $n-1$ leaves and $k$ reticulations if $\{a, b\}$ is a cherry, or $n$ leaves and $k-1$ reticulations if $\{a, b\}$ is a reticulated cherry. Therefore, by the induction assumption, up to isomorphism, $\cN'$ is the unique phylogenetic network whose ancestral profile is $\Sigma'$.

Now let $\cN_1$ be a phylogenetic network on $X$ such that $\Sigma_{\cN}$ is the ancestral profile of $\cN_1$. Note that $\cN_1$ has the same number of non-leaf vertices as $\cN$, but not necessarily the same number of reticulations. First assume $\{a, b\}$ is a cherry of $\cN$. Then, by Corollary~\ref{cherry1}, $\{a, b\}$ is a cherry of $\cN_1$. Let $\cN'_1$ denote the phylogenetic network obtained from $\cN_1$ by reducing $b$. By Lemma~\ref{reduce}(i), up to isomorphism, $\Sigma'$ is the ancestral profile of $\cN'_1$. Thus, by the induction assumption, $\cN'_1$ is isomorphic to $\cN'$. Since $\{a, b\}$ is a cherry of $\cN_1$ and $\cN$, it follows that $\cN_1$ is isomorphic to $\cN$. 

Lastly, assume $\{a, b\}$ is a reticulated cherry of $\cN$. Then, by Lemma~\ref{cherry3}, $\{a, b\}$ is a reticulated cherry of $\cN_1$ in which $b$ is the reticulation leaf. Let $\cN'_1$ be the phylogenetic network obtained from $\cN_1$ by cutting $\{a, b\}$. By Lemma~\ref{reduce}(ii), up to isomorphism, $\Sigma'$ is the ancestral profile of $\cN'_1$. Hence, by the induction assumption, $\cN'_1$ is isomorphic to $\cN'$. As $\{a, b\}$ is a reticulated cherry of $\cN$ and $\cN_1$ in which $b$ is the reticulation leaf, we have that $\cN_1$ is isomorphic to $\cN$. This completes the proof of the uniqueness part of Theorem~\ref{main1}.
\end{proof}

\subsection{The algorithm}

Let $\cN$ be an orchard network on $X$, and let $\Sigma$ denote the ancestral profile of $\cN$. Called {\sc Orchard Tuple}, we next describe an algorithm which takes as its input $X$ and $\Sigma$, and returns a phylogenetic network $\cN_1$ on $X$ that is isomorphic to $\cN$. The proof that the algorithm works correctly is essentially the same as that used to prove the uniqueness part of Theorem~\ref{main1}, and so it is omitted. The running time of the algorithm follows its description.

\begin{enumerate}[1.]
\item If $|X|=1$, then return the phylogenetic network consisting of the single vertex in $X$.

\item Else, find a $2$-element subset, $\{a, b\}$ say, of $X$ such that either (I) $\gamma(a)=\gamma(b)$ or (II) $\gamma(a)\subset \gamma(b)$, there is no $x\in X-b$ with $\gamma(a)\subseteq \gamma(x)$, and
$$\left|\gamma(b)-\bigcup_{x\in X-b} \gamma(x)\right|=1.$$

\begin{enumerate}[(a)]
\item If $\{a, b\}$ satisfies (I) (in which case $\{a, b\}$ is a cherry), then
\begin{enumerate}[(i)]
\item Reduce $b$ in $\Sigma$ to give the \red{set} $\Sigma'$ of \red{$|X-b|$ ordered pairs}.

\item Apply {\sc Orchard Tuple} to input \red{$X'=X-b$} and $\Sigma'$. Construct $\cN_1$ from the returned phylogenetic network $\cN'_1$ on $X'$ by subdividing the arc incident to $a$ with a new vertex $p_a$, and adjoining a new leaf $b$ via the new arc $(p_a, b)$. \blue{If $|X'|=1$, then set $\cN_1$ to be the phylogenetic network consisting of the leaves $a$ and $b$ adjoined to the root.} Return $\cN_1$.
\end{enumerate}

\item Else, $\{a, b\}$ satisfies (II) (in which case $\{a, b\}$ is a reticulated cherry and $b$ is the reticulation leaf).
\begin{enumerate}[(i)]
\item Cut $\{a, b\}$ in $\Sigma$ to give the \red{set} $\Sigma'$ of \red{$|X|$ ordered pairs}.

\item Apply {\sc Orchard Tuple} to $X$ and $\Sigma'$. Construct $\cN_1$ from the returned phylogenetic network $\cN'_1$ on $X$ by subdividing the arcs incident to $a$ and $b$ with new vertices $p_a$ and $p_b$, respectively, and adding the new arc $(p_a, p_b)$. Return $\cN_1$.
\end{enumerate}
\end{enumerate}
\end{enumerate}

We now consider the running time of {\sc Orchard Tuple}. The input to the algorithm is a set $X$ and the ancestral profile of an orchard network $\cN$ on $X$ whose entries are either a non-negative integer or the symbol $-$. Let $V$ denote the vertex set of $\cN$. As noted earlier, the \red{set $\Gamma_{\cN}=\{(x, \gamma(x)): x\in X\}$} can be determined from $\Sigma$ in $O(|V|)$ time. This is a preprocessing step and it will have no effect on the theoretical running time. Except for when $|X|\in \{1, 2\}$, in which case, {\sc Orchard Tuple} runs in constant time, each iteration begins by finding a $2$-element subset of $X$ satisfying either (I) or (II). This takes $O(|X|^2 |V|)$ time as there are $O(|X|^2)$ two-element subsets of $X$ and each subset takes $O(|V|)$ time to decide if is satisfies either (I) or (II). Once such a $2$-element is found, we construct $\Sigma'$. Regardless of the way $\Sigma'$ is constructed, this takes $O(|X| |V|)$ time. When $\cN'_1$ is returned, we augment to $\cN_1$ in constant time, and so each iteration takes $O(|X|^3 |V|^2)$ time.

When we recurse, $\Sigma'$ is the ancestral profile of an orchard network with either one less leaf or one less reticulation than an orchard network for which $\Sigma$ is the ancestral profile. Thus the total number of iterations is $O(|V|)$. We conclude that {\sc Orchard Tuple} completes in $O(|X|^3 |V|^3)$ time. This completes the proof of Theorem~\ref{main1}.

\section{Conclusion}
\label{last}

\blue{The main result of this paper, Theorem~\ref{main1}, shows that the ancestral profile of an orchard network $\cN$ on $X$ uniquely determines $\cN$ amongst all phylogenetic networks on $X$. This generalises results in both~\cite{car08} and~\cite{car09}, which considered tree-sibling time-consistent networks and tree-child networks (subclasses of orchard networks whose number of reticulations is at most linear in the number of leaves). Curiously, these later results have a different motivation compared to what motivated Theorem~\ref{main1}. There the motivation is to construct a distance measure (metric) on the classes of tree-sibling time-consistent networks and tree-child networks which is computable in polynomial time. Recalling that they considered the equivalent notion of path-tuples, for two tree-sibling time-consistent (resp.\ tree-child) networks $\cN_1$ and $\cN_2$, the distance between $\cN_1$ and $\cN_2$ is the value}
$$\blue{\left|\Pi_{\cN_1}\triangle \Pi_{\cN_2}\right|,}$$
\blue{where the symmetric difference and the cardinality operator refer to multisets. It is easily checked that this same measure extends to the class of orchard networks.}

\blue{As noted in the introduction, our result does not relate to specific biological data that is readily available at present. However, a type of data that might provide ancestral profile information would be genomic fragments that follow lineage splitting and reticulation events, so that when a reticulation occurs, a trace of each fragment in the incoming lineage is preserved in (different regions of) the reticulate genome.}

\blue{Lastly, we end with a question asked by one of the referees. For a given orchard network $\cN$, is it possible to count the number of complete cherry-reduction sequences of $\cN$?}

\blue{\section*{Acknowledgements} We thank the three anonymous referees for their careful reading of the paper and constructive comments.}

\end{document}